\documentclass{article}

\usepackage[utf8]{inputenc}
\usepackage{amssymb, amsmath}
\usepackage{amsthm}
\usepackage{amsfonts} 
\usepackage{tikz-cd}
\usepackage{booktabs}
\usetikzlibrary{arrows}

\newtheorem{thm}{Theorem}[section]
\newtheorem{lem}{Lemma}[section]
\newtheorem{prop}{Proposition}[section]
\newtheorem{cor}{Corollary}[section]
\newtheorem{dfn}{Definition}[section]
\newtheorem{cjc}{Conjecture}[section]

\usepackage[parfill]{parskip}

\title{A refinement of the Kac polynomials for quivers with enough loops}
\author{Jiuzhao Hua} 

\begin{document}
\date{\vspace{-0.5cm}}\date{} 
\maketitle
\begin{abstract}
A conjecture of Kac now a theorem asserts that the polynomial now known as the Kac polynomial, which counts the isomorphism classes of absolutely indecomposable 
representations of a quiver over a finite field with a given dimension vector, has non-negative integer coefficients only.
In this paper, we show that, for quivers with enough loops, every Kac polynomial can be expressed as a sum of the refined Kac polynomials 
which are parametrized by tuples of partitions and have non-negative integer coefficients only. A closed formula for the refined Kac polynomials is given. We further introduce a new class of representations called blocks
and make a conjectural interpretation of the refined Kac polynomials for quivers with enough loops in terms of the numbers of block representations.
\end{abstract}

\section{Introduction}
Let $\mathbb{N}$ be the set of all non-negative integers, $\mathbb{Z}$ the ring of integers, $\mathbb{Q}$ the field of rational numbers,
and $\mathbb{F}_q$ a finite field with $q$ elements where $q$ is a prime power. Let $\Gamma = (\Gamma_0, \Gamma_1)$ be a finite quiver, 
$\alpha\in\mathbb{N}^n$ a dimension vector where $n=|\Gamma_0|$,
and $A_\Gamma(\alpha,q)$ the number of isomorphism classes of absolutely indecomposable representations of $\Gamma$ over $\mathbb{F}_q$ with dimension vector $\alpha$. 

In 1980, Kac \cite {VK 1980} proved that if $\Gamma$ has no loops then $A_\Gamma(\alpha,q)$ is non-zero if and only if $\alpha$ is
a positive root of the Kac-Moody algebra associated to $\Gamma$. In 1982, Kac \cite {VK 1983} proved that $A_\Gamma(\alpha,q)$ is always a polynomial in $q$ with integer coefficients for any quiver. 
Furthermore, Kac \cite {VK 1983} made the following two conjectures.

\begin{cjc}(Kac)
All coefficients of the polynomial $A_\Gamma(\alpha,q)$ are non-negative integers for all $\alpha\in\mathbb{N}^{n}$.
\end{cjc}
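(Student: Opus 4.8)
The plan is to deduce non-negativity of the coefficients of $A_\Gamma(\alpha,q)$ from a \emph{cohomological} interpretation of that polynomial. Concretely, one wants to produce, for each quiver $\Gamma$ and each dimension vector $\alpha$, an algebraic variety (or stack) $X$ over $\mathbb{Z}$ whose compactly supported cohomology $H^*_c(X)$ is of Hodge--Tate type and pure in each cohomological degree, and whose point count satisfies $|X(\mathbb{F}_q)| = A_\Gamma(\alpha,q)$ up to the expected elementary factor (a power of $q-1$ coming from framing). For such an $X$ one has $|X(\mathbb{F}_q)| = \sum_i \dim H^{2i}_c(X)\,q^i$, so the coefficients become dimensions of vector spaces and positivity is automatic. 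Two regimes must be handled differently: the case of an \emph{indivisible} $\alpha$, where a smooth candidate $X$ is available, and the \emph{general} case, where it is not.

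In the indivisible case I would take $X$ to be a Nakajima quiver variety $\mathfrak{M}_\theta$ attached to $(\Gamma,\alpha)$ with a suitable framing and a generic stability parameter $\theta$. For generic $\theta$ the space $\mathfrak{M}_\theta$ is smooth and symplectic; rescaling the symplectic form gives a $\mathbb{C}^\times$-action whose attracting sets decompose $\mathfrak{M}_\theta$ into affine cells, so $H^*_c(\mathfrak{M}_\theta)$ is pure, of Tate type, and concentrated in even degrees, and $|\mathfrak{M}_\theta(\mathbb{F}_q)| = \sum_i b_{2i}\,q^i$. One then checks --- via Hua's formula, or directly from the Hall-algebra / Weyl-integration expression for $A_\Gamma(\alpha,q)$ --- that this point count agrees with $A_\Gamma(\alpha,q)$ after removing the framing contribution; this is the theorem of Crawley-Boevey and Van den Bergh identifying $A_\Gamma(\alpha,q)$, for indivisible $\alpha$, with the Poincar\'e polynomial of a quiver variety. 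Non-negativity in the indivisible case follows at once.

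The general (divisible) case is the hard part: for a divisible $\alpha$ the naive moduli space of representations has strictly semisimple points and admits no smooth symplectic resolution, so purity can fail. The plan is to follow the character-variety route of Hausel, Letellier, and Rodriguez-Villegas: encode $(\Gamma,\alpha)$ --- including, for the loops, a positive genus --- in a comet-shaped quiver whose associated multiplicative moduli space is the character variety $\mathcal{M}$ of a punctured Riemann surface carrying \emph{generic} semisimple local monodromy data read off from $\alpha$. Genericity makes $\mathcal{M}$ smooth, and their arithmetic Fourier analysis on $\mathrm{GL}_n(\mathbb{F}_q)$, with Macdonald polynomials bookkeeping the Deligne--Lusztig / Green-function combinatorics, shows that the Kac polynomial equals the pure part of the mixed Hodge polynomial of $\mathcal{M}$. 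If one knows in addition that $H^*_c(\mathcal{M})$ is of Hodge--Tate type and pure in each cohomological degree up to the expected shift, then $A_\Gamma(\alpha,q) = \sum_i \dim\big(\mathrm{gr}^W_{2i} H^*_c(\mathcal{M})\big)\,q^i$ and the conjecture follows in general. I expect this purity/Hodge--Tate statement for the generic character variety to be the main obstacle: one must show the weight filtration on $H^*_c(\mathcal{M})$ is pure by cohomological degree up to the curious shift, and this is exactly the point where soft Hall-algebra bookkeeping is no longer enough --- because $A_\Gamma$ is extracted from Hua-type generating series by a plethystic logarithm $\mathrm{Log}$, such bookkeeping only reorganizes the polynomial and does not remove its cancellations, so one genuinely needs the global geometric and analytic input.
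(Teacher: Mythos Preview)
The paper does not itself prove Conjecture~1.1 in full; it records the conjecture as background and cites the literature. The paper's own contribution implies the conjecture only for quivers with enough loops, as a corollary of its main theorem together with the decomposition $A_\Gamma(\alpha,q)=\sum_{\lambda_*\in\Lambda_\alpha}A_\Gamma(\lambda_*,q)$, and the method there is entirely different from yours: it is a generating-function argument using Heine's identity and Mozgovoy's positivity criterion (the paper's Theorem~3.1), with no cohomology or moduli geometry at all. So in the one subcase the paper actually treats, its route is purely combinatorial, whereas yours is geometric.

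Your indivisible-case sketch is essentially the Crawley-Boevey--Van den Bergh argument and is fine as an outline. Your divisible-case plan, however, has a genuine gap. You propose to realise $A_\Gamma(\alpha,q)$ as the pure part of the mixed Hodge polynomial of a generic character variety $\mathcal{M}$ and then invoke purity of $H^*_c(\mathcal{M})$; but that purity statement is precisely what is \emph{not} available --- it lies in the circle of the $P{=}W$ and curious hard Lefschetz conjectures --- and you yourself flag it as ``the main obstacle'' without offering a way around it. The actual Hausel--Letellier--Rodriguez-Villegas proof of positivity does not go through character varieties: they attach an extra leg at each vertex of $\Gamma$ so that the relevant dimension vectors on the enlarged quiver become indivisible, build the corresponding \emph{smooth} Nakajima quiver variety (where purity is available by the Crawley-Boevey--Van den Bergh mechanism), and then extract $A_\Gamma(\alpha,q)$ as a specific isotypical component of the Weyl-group action on its cohomology. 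That representation-theoretic extraction is the missing idea in your proposal; without it your divisible-case argument does not close.
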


As such, $A_\Gamma(\alpha,q)$ are known as the Kac polynomials in the literature. This conjecture was proved by Crawley-Boevey and Van den Bergh \cite{C-V 2004} for indivisible dimension vectors, 
and by Mozgovoy \cite {SM 2013} for quivers with enough loops,
and by Hausel, Letellier \& Rodriguez-Villegas \cite{HLR 2013} for general quivers. A quiver has enough loops if there is at least one loop at each vertex. 

\begin{cjc}(Kac)
If $\,\Gamma$ has no loops, then the constant term of the polynomial $A_\Gamma(\alpha,q)$ is equal to the multiplicity of the root vector $\alpha$ in the Kac-Moody algebra associated to $\Gamma$.
\end{cjc}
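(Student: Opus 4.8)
The plan is to deduce Conjecture 2 from the geometry of quiver varieties together with Nakajima's geometric construction of $\mathfrak g=\mathfrak g(\Gamma)$, following the circle of ideas of Crawley-Boevey--Van den Bergh and of Hausel. Two reductions frame the problem. If $\alpha$ is not a positive root of $\mathfrak g$, then $A_\Gamma(\alpha,q)\equiv 0$ by Kac's theorem recalled above while $\mathfrak g_\alpha=0$, so both sides vanish; if $\alpha$ is a positive \emph{real} root, then $A_\Gamma(\alpha,q)=1$ and $\dim\mathfrak g_\alpha=1$. Hence all the content lies in the case of a positive \emph{imaginary} root $\alpha$, where one further knows $\deg A_\Gamma(\alpha,q)=p(\alpha):=1-\langle\alpha,\alpha\rangle$ with leading coefficient $1$.

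Fix a weight $\lambda\in\mathbb Z^{\Gamma_0}$ generic relative to $\alpha$ with $\lambda\cdot\alpha=0$, let $\mathcal Q(\alpha,\lambda)$ be the moduli space of $\alpha$-dimensional semisimple representations of the deformed preprojective algebra $\Pi^{\lambda}(\Gamma)$, and let $\pi\colon\mathcal Q(\alpha,\lambda)\to\mathcal Q_0(\alpha)$ be the projective map to the affine quotient; for $\alpha$ indivisible this space is smooth, connected, of dimension $2p(\alpha)$. Running the count that defines $A_\Gamma(\alpha,q)$ through the deformed-preprojective / Hall-algebra formalism of Crawley-Boevey--Van den Bergh expresses $A_\Gamma(\alpha,q)$ as a degree-halved, shifted Poincar\'e-type polynomial of $\mathcal Q(\alpha,\lambda)$, so that the constant term $a_0(\alpha)$ becomes a single distinguished graded piece of the cohomology --- concretely $\dim H^{2p(\alpha)}(\mathcal Q(\alpha,\lambda))$, which, via the retraction of $\mathcal Q(\alpha,\lambda)$ onto a compact ``core'' Lagrangian $\mathfrak L(\alpha)$, equals the number of top-dimensional irreducible components of $\mathfrak L(\alpha)$. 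For this translation of a \emph{point count} into a Betti number one needs the mixed Hodge structure on $H^{*}_c(\mathcal Q(\alpha,\lambda))$ to be pure of Tate type, which I would extract from a contracting $\mathbb C^{\times}$-action (the hyperk\"ahler rotation, or the conical structure after rescaling $\lambda$) forcing the retraction onto $\mathfrak L(\alpha)$, together with a Bialynicki-Birula decomposition and the semismallness of $\pi$, which pin the weight filtration to the cohomological degree.

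It remains to identify the number of top components of $\mathfrak L(\alpha)$ with $\dim\mathfrak g_\alpha$. Here one introduces a framing vertex: $\mathfrak L(\alpha)$ sits inside a Nakajima quiver variety $\mathfrak M(\mathbf v,\mathbf w)$ for a suitable framing $\mathbf w$, and Nakajima's construction makes $\bigoplus_{\mathbf v}H^{\mathrm{BM}}_{\mathrm{top}}(\mathfrak L(\mathbf v,\mathbf w))$ into a weight-graded $\mathfrak g(\Gamma)$-module. Passing to the stable ($\mathbf w\to\infty$) limit and comparing the resulting generating function with the product formula $\prod_{\beta>0}(1-x^{\beta})^{-\dim\mathfrak g_\beta}=\sum_{\mathbf v}(\cdots)\,x^{\mathbf v}$ dictated by the Poincar\'e--Birkhoff--Witt theorem for $\mathfrak g$ identifies the space attached to $\alpha$ above with the root space $\mathfrak g_\alpha$; tracking dimensions through the preceding step then gives $a_0(\alpha)=\dim\mathfrak g_\alpha$, as desired.

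The obstacle has two layers. First, the bookkeeping must be made exact: one has to pin down precisely which cohomology space the constant term selects and verify that it matches Nakajima's space under the whole chain of isomorphisms --- and since the adjoint (``imaginary'') part of $\mathfrak g$ is not literally a weight space of an integrable highest-weight module, this comparison has to be routed through the denominator/PBW identity rather than read off directly. Second, and more seriously, for \emph{divisible} $\alpha$ the space $\mathcal Q(\alpha,\lambda)$ is singular, so smoothness and hence purity can fail and the correspondence with a single smooth quiver variety breaks down; I would then either pass to a resolution and control how the cohomology changes, or migrate the whole argument to the Betti/character-variety side via non-abelian Hodge theory as in Hausel--Letellier--Rodriguez-Villegas, where the necessary purity is available, and re-prove there that the bottom cohomology computes $\mathfrak g_\alpha$.
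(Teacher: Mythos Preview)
The paper does not prove this statement. Conjecture~1.2 is recorded in the introduction purely as background, with its proof attributed to Crawley-Boevey and Van den Bergh \cite{C-V 2004} for indivisible dimension vectors and to Hausel \cite{TH 2010} in general; no argument for it appears anywhere in the paper, whose results concern the refined polynomials $A_\Gamma(\lambda_*,q)$ for quivers with enough loops and have nothing to do with Kac--Moody root multiplicities. There is therefore no ``paper's own proof'' to compare your proposal against.

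For what it is worth, your sketch is a faithful high-level outline of exactly the cited works: the passage to moduli for the deformed preprojective algebra, purity of cohomology from a contracting $\mathbb{C}^\times$-action on a hyperk\"ahler variety, the identification of the constant term of $A_\Gamma(\alpha,q)$ with a top Betti number, and the match of that Betti number with $\dim\mathfrak g_\alpha$ via Nakajima's construction and the Weyl--Kac denominator formula. You also correctly isolate the divisible case as the real obstruction, which is precisely where Hausel's argument \cite{TH 2010} goes beyond \cite{C-V 2004}. As written it remains a sketch---in particular the framing needed to land in a \emph{smooth} Nakajima variety, and the precise bookkeeping that turns Nakajima's weight-space description into the statement about $\mathfrak g_\alpha$, would have to be spelled out---but none of that is material to the present paper, which does not engage with Conjecture~1.2 beyond citing it.
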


This conjecture was again proved by Crawley-Boevey and Van den Bergh \cite{C-V 2004} for indivisible dimension vectors, 
and by Hausel \cite{TH 2010} for arbitrary dimension vectors.

Rodriguez-Villegas \cite{FRV 2011} introduced a refinement of Kac polynomials over partitions for the $g$-loop quiver and conjectured that those refined Kac polynomials have non-negative integer coefficients only.
Rodriguez-Villegas \cite{FRV 2011} also proved a formula for the values of the refined Kac polynomials at $q=1$.
In this paper, we generalize Rodriguez-Villegas' refinement by defining the refined Kac functions for an arbitrary quiver and show that they are polynomials in $q$ with non-negative integer coefficients if the quiver has
enough loops.
The main tool we use in this paper is a formula for $A_\Gamma(\alpha,q)$ by Hua \cite{JH 2000}, which is recalled here.

Without loss of generality, we fix an order on the vertices of $\Gamma$, say $\Gamma_0 = \{v_1,v_2,\cdots, v_n\}$ where $n=|\Gamma_0|$, and assume that the number of arrows from vertex $v_i$ to vertex $v_j$ is $a_{ij}\in\mathbb{N}$ for $1\le i, j \le n$. The matrix
$C(\Gamma) := [a_{ij}]_{1\le i,j \le n}$ is called the \textit{companion matrix} of $\Gamma$.
For $\alpha=(\alpha_1,\cdots,\alpha_n)\in\mathbb{N}^n$, let $\mathrm{R}_\Gamma(\alpha, \mathbb{F}_q)$ be the set of all representations of $\Gamma$ over $\mathbb{F}_q$ with dimension vector $\alpha$. $\mathrm{R}_\Gamma(\alpha, \mathbb{F}_q)$ can be identified with
the affine variety $\prod_{i,j=1}^n\mathbb{F}_q^{a_{ij}\alpha_i\alpha_j}$ and hence we have  
\begin{equation}\label{num of rep}
|\mathrm{R}_\Gamma(\alpha, \mathbb{F}_q)|=q^{\sum_{1\le i ,j \le n}a_{ij}\alpha_i\alpha_j} = q^{\alpha C(\Gamma) \alpha^t},
\end{equation}
where $\alpha^t$ is the transpose of vector $\alpha$ . 
Let $\mathrm{GL}(\alpha, \mathbb{F}_q) = \prod_{i=1}^n\mathrm{GL}(\alpha_i, \mathbb{F}_q)$
where $\mathrm{GL}(\alpha_i, \mathbb{F}_q)$ is the general linear group over $\mathbb{F}_q$ of order $\alpha_i$. 
Since for $m\in\mathbb{N}$, 
\begin{equation}\label{order of GL}
|\mathrm{GL}(m, \mathbb{F}_q)|  =  \prod_{i=0}^{m-1}(q^m - q^i) ,
\end{equation}
we have 
$$
|\mathrm{GL}(\alpha, \mathbb{F}_q)| = \prod_{i=1}^n|\mathrm{GL}(\alpha_i, \mathbb{F}_q)| = \prod_{i=1}^n\prod_{s=0}^{\alpha_i-1}(q^{\alpha_i} - q^s).
$$
In what follows, we treat $|\mathrm{R}_\Gamma(\alpha, \mathbb{F}_q)|$ and $|\mathrm{GL}(\alpha, \mathbb{F}_q)|$ as polynomial functions in $q$.
Recall that the Euler form associated to $\Gamma$ is 
\begin{equation}\label{def euler form}
\langle \alpha, \beta \rangle := \sum_{i=1}^n\alpha_i\beta_i - \sum_{i,j=1}^na_{ij}\alpha_i\beta_j \text{ for } \alpha, \beta \in \mathbb{Z}^n.
\end{equation}
Thus we have $\langle \alpha, \beta \rangle = \alpha (I - C(\Gamma)) \beta^t$ where $I$ is the identity matrix.

Let $\mathbb{Q}(q)[[X_1,\dots,X_n]]$ be the formal power series ring in $n$ variables over the rational function field $\mathbb{Q}(q)$. For a dimension vector $\alpha=(\alpha_1,\cdots,\alpha_n)\in\mathbb{N}^n$, 
we write $X^\alpha = \prod_{i=1}^nX_i^{\alpha_i}$. We have the following theorem which is equivalent to Theorem 4.6 of Hua \cite{JH 2000} (see Appendix I for the proof).

\begin{thm}\label{hua thm}
With the notation above, let $P_\Gamma(q, X_1,\cdots,X_n) $  be the formal power series in the ring $\mathbb{Q}(q)[[X_1,\dots,X_n]]$ defined as follows:
\begin{equation}\label{def PX}
P_\Gamma(q, X_1,\cdots,X_n) := 1 + \sum_{\alpha_*} \!
\Big(\prod_{k\ge 1} \frac{q^{\langle \alpha^k, \alpha^k\rangle}} {q^{\langle \beta^k, \beta^k \rangle}}  \frac{|\mathrm{R}_\Gamma(\alpha^k, \mathbb{F}_q)|}{|\mathrm{GL}(\alpha^k, \mathbb{F}_q)|} X^{k\alpha^k}\Big),
\end{equation}
where the sum runs over all tuples of dimension vectors  $\alpha_*=(\alpha^1,\alpha^2,\cdots, \alpha^r)$ such that $r\ge 1$, $\alpha^k\in\mathbb{N}^n$ for $1\le k \le r$ with $\alpha^r\ne 0$, 
and $\beta^k=\sum_{i\ge k} \alpha^i$ for $1\le k\le r$. Then we have the following identity:
\begin{equation}\label{hua formula}
P_\Gamma(q,X_1,\cdots,X_n) = \,\,\mathrm{Exp}\Big(\frac{1}{q-1} \sum_{\alpha\in\mathbb{N}^n\backslash\{0\}}\!A_\Gamma(\alpha,q)X^\alpha\Big),
\end{equation}
where the $\mathrm{Exp}$ is the plethystic exponential map (see Mozgovoy \cite{SM 2007} for its definition and properties).
\end{thm}

This paper is organized as follows. In Section 2, we define the refined Kac functions over tuples of partitions for an arbitrary quiver by refining the formal power series $P_\Gamma(q, X_1,\cdots,X_n)$.
In Section 3, we first show that, for a quiver with enough loops, if all parts of the partitions in an $n$-tuple are less than or equal to $1$, then the corresponding refined Kac function is a polynomial in $q$
with non-negative integer coefficients. Then we
construct a larger quiver $\Gamma_{\!m}$ for $m\ge 1$ when $\Gamma$ has enough loops, and then establish a link between those refined Kac functions of
$\Gamma$ parametrized by tuples of partitions with parts less than or equal to $m$ and those refined Kac polynomials of
$\Gamma_{\!m}$ parametrized by tuples of partitions with parts less than or equal to $1$. 
As a consequence of this link, every refined Kac function of $\Gamma$ must be a polynomial in $q$
with non-negative integer coefficients if $\Gamma$ has enough loops. In Section 4, we introduce a new class of representations called blocks
and make a conjectural interpretation of the refined Kac polynomials for quivers with enough loops in terms of the numbers of block representations.
In Appendix I, we prove Theorem \ref{hua thm} for the sake of completeness.
In Appendix II, we show some examples of the Kac polynomials and the refined Kac functions.

\section{Definition of the refined Kac functions}

Let $\mathcal{P}$ be the set of partitions of all non-negative integers. For a partition $\lambda=[\lambda_1, \lambda_2, \cdots, \lambda_r]\in \mathcal{P}$ with
 $r > 0$ and $\lambda_i\ge\lambda_{i+1}$ for $1\le i \le r-1$ ,
$|\lambda| := \lambda_1+\cdots+\lambda_r$ is called the \textit{weight} of $\lambda$, $m_{\lambda}^{(k)} := |\{\lambda_i : \lambda_i =  k , i\ge 1\}|$ is called the \textit{mulitplicity} of $\lambda$ of order $k$,
and $\lambda^\mathrm{v} := (m_{\lambda}^{(1)},\cdots,m_{\lambda}^{(r)})$ is called the \textit{mulitplicity vector} of $\lambda$ .
For an $n$-tuple of partitions $\lambda_*=(\lambda^1, \lambda^2, \cdots,\lambda^n) \in \mathcal{P}^n$, $\lambda_*^{(k)}:=(m_{\lambda^1}^{(k)}, m_{\lambda^2}^{(k)}, \cdots, m_{\lambda^n}^{(k)}) \in\mathbb{N}^n$ is called the \textit{multiplicity vector}
of $\lambda_*$ of order $k$ and $\lambda_*^\mathrm{v} :=  (\lambda_*^{(1)}, \lambda_*^{(2)}, \cdots, \lambda_*^{(r)})$ is called the \textit{tuple of multiplicity vectors} of $\lambda_*$, 
where $r$ is the maximal part of all parts in $\lambda^i$ for $1\le i \le n$. Conversely, for each $r$-tuple of vectors $\alpha_* = (\alpha^1, \alpha^2,\cdots,\alpha^r)$ with $\alpha^i \in \mathbb{N}^n$, there exists a unique $n$-tuple of partitions
 $\lambda_*=(\lambda^1, \lambda^2, \cdots,\lambda^n) \in \mathcal{P}^n$ such that  $\lambda_*^\mathrm{v} = \alpha_*$.

Let $\mathbb{Q}(q)[[X_{ik}]]_{1\le i \le n, k\ge 1}$ be the formal power series ring over $\mathbb{Q}(q)$ in infinitely many variables $X_{ik}$. 
For a tuple of dimension vectors $\alpha_*=(\alpha^1,\alpha^2,\cdots, \alpha^r)$ where $\alpha^k=(\alpha^k_1, \cdots, \alpha^k_n) \in \mathbb{N}^n$ for $1\le k \le r$, 
we write $X^{\alpha_*} = \prod_{i=1}^n\prod_{ k=1}^r X_{ik}^{k\alpha^k_i}$. 

Be reminded that, throughout this paper, the superscripts in $\lambda^k$, $\alpha^k$, $\beta^k$, $\alpha_i^k$ and $v_i^k$ are not power indices, they are just labels.

\begin{table}[h]
\parbox{.45\linewidth} {
\centering
\textbf{$X_{ik}$ distributed over $\Gamma$}
\begin{tabular}{c|cccc}
\toprule
\multicolumn{1}{c}{} & \multicolumn{4}{c}{\textrm{Vertex $i$}}  \\
\cmidrule(rl){2-5} 
\textrm{Part $k$} & {$1$} & {$2$} & {$\cdots$} & {$n$}  \\
\midrule
$1$ & $X_{11}$ & $X_{21}$ & $\cdots$ & $X_{n1}$ \\
$2$ & $X_{12}$ & $X_{22}$ & $\cdots$ & $X_{n2}$ \\
$\vdots$ & $\vdots$ & $\vdots$ & $\vdots$ & $\vdots$  \\
$m$ & $X_{1m}$ & $X_{2m}$ & $\cdots$ & $X_{nm}$ \\
$\vdots$ & $\vdots$ & $\vdots$ & $\vdots$ & $\vdots$  \\
\bottomrule
\end{tabular}
}
\hfill
\parbox{.45\linewidth} {
\centering
\textbf{$\alpha_*$ distributed over $\Gamma$}
\begin{tabular}{c|cccc}
\toprule
\multicolumn{1}{c}{} & \multicolumn{4}{c}{\textrm{Vertex $i$}}  \\
\cmidrule(rl){2-5} 
\textrm{Part $k$} & {$1$} & {$2$} & {$\cdots$} & {$n$}  \\
\midrule
$1$ & $\alpha_1^1$ & $\alpha_2^1$ & $\cdots$ & $\alpha_n^1$ \\
$2$ & $\alpha_1^2$ & $\alpha_2^2$ & $\cdots$ & $\alpha_n^2$ \\
$\vdots$ & $\vdots$ & $\vdots$ & $\vdots$ & $\vdots$  \\
$m$ & $\alpha_1^m$ & $\alpha_2^m$ & $\cdots$ & $\alpha_n^m$ \\
$\vdots$ & $\vdots$ & $\vdots$ & $\vdots$ & $\vdots$  \\
\bottomrule
\end{tabular}
}
\end{table}

We define a formal power series $Q_\Gamma$ in $\mathbb{Q}(q)[[X_{ik}]]$ as follows:
\begin{equation}\label{def Q}
Q_\Gamma(q, X_{ik})_{1\le i \le n, k\ge 1}:= 1 + \sum_{\alpha_*} \!
\Big(\prod_{k\ge 1} \frac{q^{\langle \alpha^k, \alpha^k\rangle}} {q^{\langle \beta^k, \beta^k \rangle}}  \frac{|\mathrm{R}_\Gamma(\alpha^k, \mathbb{F}_q)|}{|\mathrm{GL}(\alpha^k, \mathbb{F}_q)|}\Big) X^{\alpha_*},
\end{equation}
where the sum runs over all tuples of dimension vectors  $\alpha_*=(\alpha^1,\alpha^2,\cdots, \alpha^r)$ such that $r\ge 1$, $\alpha^k\in\mathbb{N}^n$ for $1\le k \le r$ with $\alpha^r\ne 0$, 
and $\beta^k=\sum_{i\ge k} \alpha^i$ for $1\le k \le r$.

\begin{dfn} \label{def refine}
The refined Kac functions $A_\Gamma(\lambda_*,q)$ for $\lambda_*\in\mathcal{P}^n\backslash\{0\}$ are defined by the following equation:
\begin{equation}\label{def refined kac log}
(q-1)\mathrm{Log}(Q_\Gamma(q, X_{ik})_{1\le i \le n, k\ge 1}) = \sum_{\lambda_*\in\mathcal{P}^n\backslash\{0\}} A_\Gamma(\lambda_*,q) X^{\lambda^{\mathrm{v}}_*},
\end{equation}
where the $\mathrm{Log}$ is the plethystic logarithm map (see Mozgovoy \cite{SM 2007} for its definition and properties).
\end{dfn}

Note that $A_\Gamma(\lambda_*,q)$ are well defined and independent of the orientations of $\Gamma$. Also note that equation (\ref{def refined kac log}) is equivalent to the following equation because $\mathrm{Log}$ and $\mathrm{Exp}$ are inverse of each other:
\begin{equation}\label{def refined kac exp}
Q_\Gamma(q, X_{ik})_{1\le i \le n, k\ge 1} =  \mathrm{Exp} \Big(\frac{1}{q-1}\sum_{\lambda_*\in\mathcal{P}^n\backslash\{0\}} A_\Gamma(\lambda_*,q) X^{\lambda^{\mathrm{v}}_*}\Big).
\end{equation}

So far we are only certain that $A_\Gamma(\lambda_*,q)\in \mathbb{Q}(q)$. It will be clear in the next section that $A_\Gamma(\lambda_*,q)$ are polynomials in $q$ with non-negative integer coefficients if $\Gamma$ has enough loops. 
Note that if $\Gamma$ does not have enough loops, then $A_\Gamma(\lambda_*,q)$ are not necessarily polynomials (see Appendix II for a counter example).

For $\alpha=(\alpha_1,\cdots,\alpha_n)\in\mathbb{N}^n$, let 
$\Lambda_\alpha = \{ (\lambda^1, \cdots,\lambda^n) \in \mathcal{P}^n : |\lambda^i| = \alpha_i  \textrm{ for all } i \}$. 
By specializing $X_{ik}$ to $X_i$ for all $1\le i\le n$ and $k\ge 1$ in equation (\ref{def Q}) and (\ref{def refined kac exp}), we get the following result by 
of identity (\ref{def PX}) and (\ref{hua formula}).

\begin{prop} 
With the notation above, we have the following identity:
\begin{equation}
A_\Gamma(\alpha,q) = \sum_{\lambda_*\in \Lambda_\alpha} A_\Gamma(\lambda_*,q).
\end{equation}
\end{prop} 

For a representation $M$ of $\Gamma$ over a field, its endomorphism ring is denoted by $\mathrm{End}(M)$. Any nilpotent matrix is conjugate to a direct sum of Jordan matrices with eigenvalues 0, thus the 
sizes of the Jordan blocks give rise to a unique partition. For example, the following two nilpotent matrices give rise to partition $[3,2]$ and $[2,2,1]$ respectively:
$$
\left[
\begin{array}{ccccc}
0 & 1 &  &  &  \\ 
 & 0 & 1 &  &  \\ 
  &  &  0 &  &  \\
 & &  & 0 & 1 \\
 &  &  &  & 0 
\end{array}
\right], \,\,\,\,
\left[
\begin{array}{ccccc}
0 & 1 &  &  &  \\ 
 & 0 &  &  &  \\ 
  &  &  0 & 1  &  \\
 & &  &  0 &  \\
 &  &  &  & 0 
\end{array}
\right].
$$

And hence, every nilpotent endomorphism 
$\theta\in\mathrm{End}(M)$ gives rise to a unique $n$-tuple of partitions $\theta_*\in\mathcal{P}^n$ indexed by the vertices of $\Gamma$.
Let $\lambda_*=(\lambda^1,\cdots,\lambda^n)\in\mathcal{P}^n$ be an $n$-tuple of partitions, $|\lambda_*| : =(|\lambda^1|,\cdots, |\lambda^n|)\in\mathbb{N}^n$ be the dimension vector associated to
$\lambda_*$, and define the set $\mathrm{E}_\Gamma(\lambda_*, \mathbb{F}_q)$ as follows:
$$
\mathrm{E}_\Gamma(\lambda_*, \mathbb{F}_q) = \big\{ (M,\theta)\,|\,M \in \mathrm{R}_\Gamma(|\lambda_*|, \mathbb{F}_q), \theta\in\mathrm{End}(M), \theta\textit{ is nilpotent}, \theta_* = \lambda_*  \big\}.
$$
The finite linear group $\mathrm{GL}(|\lambda_*|, \mathbb{F}_q)$ acts on $\mathrm{E}_\Gamma(\lambda_*, \mathbb{F}_q)$ naturally and the isomorphism classes of
$\mathrm{E}_\Gamma(\lambda_*, \mathbb{F}_q)$ are in one-to-one correspondence with
the orbits. The following result is a consequence of Lemma 2.2 of Bozec-Schiffmann-Vasserot \cite{B-S-V 2018}.

\begin{prop} 
With the notation above, we have the following identity for any $\lambda_*\in\mathcal{P}^n$:
\begin{equation}\label{endormphism module GL}
\frac{|\mathrm{E}_\Gamma(\lambda_*, \mathbb{F}_q)|} {|\mathrm{GL}(|\lambda_*|, \mathbb{F}_q)|} =
\prod_{k=1}^r \frac{q^{\langle \alpha^k, \alpha^k\rangle}} {q^{\langle \beta^k, \beta^k \rangle}}  \frac{|\mathrm{R}_\Gamma(\alpha^k, \mathbb{F}_q)|}{|\mathrm{GL}(\alpha^k, \mathbb{F}_q)|},
\end{equation}
where $\alpha^k\in\mathrm{N}^n$ for $1\le k \le r$ such that  $(\alpha^1, \alpha^2,\cdots,\alpha^r) = \lambda_*^{\mathrm{v}}$, i.e., $\alpha^k = \lambda_*^{(k)}$, and where 
$\beta^k=\sum_{i\ge k} \alpha^i$ for $1\le k \le r$.
\end{prop} 

The following result is a consequence of identity (\ref{def Q}), (\ref{def refined kac exp}) and (\ref{endormphism module GL}).
\begin{prop} 
With the notation above, the following identity holds in the formal power series ring $\mathbb{Q}(q)[[X_{ik}]]_{1\le i \le n, k\ge 1}$ for any quiver $\Gamma$:
\begin{equation}\label{link to CSV}
\sum_{\lambda_*\in\mathcal{P}^n} \frac{|\mathrm{E}_\Gamma(\lambda_*, \mathbb{F}_q)|} {|\mathrm{GL}(|\lambda_*|, \mathbb{F}_q)|}  X^{\lambda^{\mathrm{v}}_*}  =
 \mathrm{Exp}\Big(\frac{1}{q-1} \! \sum_{\lambda_*\in\mathcal{P}^n\backslash\{0\}} \! A_\Gamma(\lambda_*,q) X^{\lambda^{\mathrm{v}}_*}\Big).
\end{equation}
\end{prop}

\section{Positivity of the refined Kac polynomials}

A formal power series $U=\sum_{\alpha\in\mathbb{N}^n} u_\alpha(q) X^\alpha \in\mathbb{Q}(q)[[X_1,\dots,X_n]]$ with constant term $1$ is said to \textit{have the positivity property} if there exist polynomials 
$v_\alpha(q)$ in $q$ with non-negative integer coefficients for all $\alpha\in\mathbb{N}^n\backslash\{0\}$
such that 
\begin{align*}
U = \mathrm{Exp}\Big(\frac{1}{q-1} \sum_{\alpha\in\mathbb{N}^n\backslash\{0\}}\!v_\alpha(q)X^\alpha\Big).
\end{align*}
Note that, if the polynomials $v_\alpha(q)$ exist then they must be unique.
The following theorem is a reformation of Theorem 4.3 of Mozgovoy \cite{SM 2013}.
\begin{thm} \label{M thm}
(Mozgovoy)
Suppose that $C$ is an $n\times n$ matrix with non-negative integer entries, $U=\sum_{\alpha\in\mathbb{N}^n} u_\alpha(q) X^\alpha$ is a formal power series in 
$\mathbb{Q}(q)[[X_1,\dots,X_n]]$ with constant term 1 and 
$V=\sum_{\alpha\in\mathbb{N}^n} u_\alpha(q) q^{\alpha C \alpha^t}X^\alpha$. If $U$ has the positivity property then $V$ must also have the positivity property.
\end{thm}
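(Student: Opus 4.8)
The statement is, as the author notes, a reformulation of Theorem~4.3 of \cite{SM 2013}, so one could simply unwind the notation and quote it; but here is a plan for a direct proof. First, reduce to an elementary matrix: writing $T_C$ for the coefficient-wise operator $\sum_\alpha u_\alpha X^\alpha\mapsto\sum_\alpha u_\alpha q^{\alpha C\alpha^t}X^\alpha$, one has $T_{C+C'}=T_C\circ T_{C'}$ because $q^{\alpha(C+C')\alpha^t}=q^{\alpha C\alpha^t}q^{\alpha C'\alpha^t}$; since every $n\times n$ matrix with non-negative integer entries is a finite sum of matrices $E_{ij}$ carrying a single $1$ in position $(i,j)$, it suffices to treat $C=E_{ij}$, i.e. the operator $T:=T_{E_{ij}}$ given by $X^\alpha\mapsto q^{\alpha_i\alpha_j}X^\alpha$.

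Next, two structural remarks organise the argument. (a) Because $\alpha_i\alpha_j=\beta_i\beta_j+\gamma_i\gamma_j+(\beta_i\gamma_j+\gamma_i\beta_j)$ whenever $\alpha=\beta+\gamma$, the map $T$ is a ring isomorphism from $(\mathbb{Q}(q)[[X]],\cdot\,)$ onto $(\mathbb{Q}(q)[[X]],\star)$, where $X^\beta\star X^\gamma:=q^{\beta_i\gamma_j+\gamma_i\beta_j}X^{\beta+\gamma}$. (b) Since $\mathrm{Exp}\!\bigl(\tfrac{q^bX^\alpha}{q-1}\bigr)=\prod_{m\ge b}(1-q^mX^\alpha)$ and the elements $\tfrac{q^b}{q-1}$, $b\ge 0$, span $\tfrac1{q-1}\mathbb{N}[q]$ over $\mathbb{N}$, a formal power series has the positivity property exactly when it belongs to the multiplicative submonoid $\mathcal{R}$ of $\mathbb{Q}(q)[[X]]$ generated (allowing convergent infinite products) by the ``atoms'' $\mathrm{Exp}\!\bigl(\tfrac{q^bX^\alpha}{q-1}\bigr)$, $\alpha\in\mathbb{N}^n\setminus\{0\}$, $b\ge 0$. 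Combining (a) and (b), $T(\mathcal{R})$ is the $\star$-submonoid of $\mathbb{Q}(q)[[X]]$ generated by the images $T\bigl(\mathrm{Exp}(\tfrac{q^bX^\alpha}{q-1})\bigr)$, so the theorem follows once we know (i) every such image lies in $\mathcal{R}$, and (ii) $\mathcal{R}$ is closed under $\star$.

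For (i), one computes $T\bigl(\mathrm{Exp}(\tfrac{q^bX^\alpha}{q-1})\bigr)=\sum_{k\ge0}\frac{(-1)^kq^{\binom{k}{2}+bk+k^2\alpha_i\alpha_j}}{(q;q)_k}\,X^{k\alpha}$, where $(q;q)_k=\prod_{j=1}^{k}(1-q^j)$; this is supported on the ray $\{k\alpha:k\ge 0\}$, so (i) is exactly the one-variable instance of the theorem itself: the twist $\sum_n a_n(q)t^n\mapsto\sum_n a_n(q)q^{cn^2}t^n$ (with $c=\alpha_i\alpha_j\in\mathbb{N}$) applied to the positive series $\mathrm{Exp}(\tfrac{q^bt}{q-1})$. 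I would prove this by computing $(q-1)\mathrm{Log}$ of the displayed series directly using $q$-binomial identities; for instance, when $b=0$, $c=1$ one finds $qt+q^3t^2+\cdots$, whose coefficients all lie in $\mathbb{N}[q]$, as required. For (ii) I would use the identity $f\star g=\sum_\gamma g_\gamma X^\gamma\cdot\bigl(f\big|_{X_i\mapsto q^{\gamma_j}X_i,\ X_j\mapsto q^{\gamma_i}X_j}\bigr)$ together with the invariance of $\mathcal{R}$ under the rescalings $X_i\mapsto q^{c}X_i$ ($c\ge 0$) and an induction on total degree. The one-variable positivity in (i)---the place where the extra positive terms are created---is what I expect to be the real obstacle; step (ii), and all the bookkeeping that precedes it, is comparatively routine.
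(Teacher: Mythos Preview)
The paper gives no proof of this theorem: it is stated as a reformulation of Theorem~4.3 of \cite{SM 2013} and then used as a black box. So there is nothing in the paper to compare your outline against; your direct attempt already goes well beyond what the author does.

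That said, your plan has a genuine gap in step~(ii). The identity
\[
f\star g=\sum_\gamma g_\gamma(q)\,X^\gamma\cdot\bigl(f\big|_{X_i\mapsto q^{\gamma_j}X_i,\ X_j\mapsto q^{\gamma_i}X_j}\bigr)
\]
expresses $f\star g$ as a $\mathbb{Q}(q)$-linear combination of series, but $\mathcal{R}$ is only a \emph{multiplicative} monoid: it is not closed under sums, the coefficients $g_\gamma(q)$ typically carry denominators (already for an atom $g=\mathrm{Exp}(q^bX^\alpha/(q-1))$ they lie in $\mathbb{Q}(q)\setminus\mathbb{N}[q]$), and the individual summands $X^\gamma\cdot f^{(\gamma)}$ have zero constant term, so they are not in $\mathcal{R}$ to begin with. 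I do not see how an ``induction on total degree'' repairs this; the obstruction is structural, not a matter of truncation. In effect you have relocated the whole content of the theorem into~(ii): once one knows that $\mathcal{R}$ is $\star$-closed, the result is immediate from~(i), but~(ii) is not visibly easier than the original statement, and your sketch does not establish it.

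You already flag~(i) as the ``real obstacle'' and leave it open; it is indeed nontrivial (it is the $n=1$ case of the theorem), but even a complete proof of~(i) would not close the argument without a correct proof of~(ii). Mozgovoy's own argument does not proceed by this kind of atom-by-atom reduction; it passes through admissibility of generating series in a quantum torus and relies on the Kontsevich--Soibelman/motivic DT framework. Quoting \cite{SM 2013}, as the paper does, is the honest route unless you are prepared to reproduce that machinery or find a genuinely new elementary argument for~(ii).
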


\begin{lem}\label{positivity lemma}
If $\,\Gamma$ is a quiver with enough loops, then the following formal power series has the positivity property:
\begin{align*}
\sum_{\alpha\in\mathbb{N}^n} \frac{|\mathrm{R}_\Gamma(\alpha, \mathbb{F}_q)|}{|\mathrm{GL}(\alpha, \mathbb{F}_q)|} X^\alpha.
\end{align*}
\end{lem}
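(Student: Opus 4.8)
The plan is to invoke Mozgovoy's Theorem~\ref{M thm} to strip off a matrix power of $q$ and thereby reduce to a one-vertex generating function that is controlled by a classical $q$-series identity. Since $\Gamma$ has enough loops we have $a_{ii}\ge 1$ for all $i$, so $B:=C(\Gamma)-I$ has non-negative integer entries. Put $u_\alpha(q):=q^{\sum_i\alpha_i^2}/|\mathrm{GL}(\alpha,\mathbb{F}_q)|$ and $U:=\sum_{\alpha\in\mathbb{N}^n}u_\alpha(q)X^\alpha$, which has constant term $1$. From $|\mathrm{R}_\Gamma(\alpha,\mathbb{F}_q)|=q^{\alpha C(\Gamma)\alpha^t}$ and $\alpha C(\Gamma)\alpha^t=\sum_i\alpha_i^2+\alpha B\alpha^t$, the series in the statement equals $\sum_\alpha u_\alpha(q)\,q^{\alpha B\alpha^t}X^\alpha$. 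Hence, by Theorem~\ref{M thm} applied with $C=B$, it suffices to show that $U$ has positivity property.

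To do this, I would first factor $U=\prod_{i=1}^n U_i$ with $U_i:=\sum_{d\ge 0}q^{d^2}|\mathrm{GL}(d,\mathbb{F}_q)|^{-1}X_i^d$, using $|\mathrm{GL}(\alpha,\mathbb{F}_q)|=\prod_i|\mathrm{GL}(\alpha_i,\mathbb{F}_q)|$. A short computation gives $|\mathrm{GL}(d,\mathbb{F}_q)|=q^{\binom{d}{2}}\prod_{j=1}^d(q^j-1)=q^{d^2}\prod_{j=1}^d(1-q^{-j})$, so $q^{d^2}/|\mathrm{GL}(d,\mathbb{F}_q)|=\prod_{j=1}^d(1-q^{-j})^{-1}$. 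Euler's identity $\sum_{d\ge 0}z^d/\prod_{j=1}^d(1-t^j)=\prod_{k\ge 0}(1-zt^k)^{-1}$ (the $a=0$ case of the $q$-binomial theorem, valid as a formal identity in $z$), applied at $t=q^{-1}$, then gives
\begin{equation*}
U_i=\prod_{k\ge 0}\frac{1}{1-q^{-k}X_i}=\mathrm{Exp}\Big(\sum_{k\ge 0}q^{-k}X_i\Big)=\mathrm{Exp}\Big(\frac{q}{q-1}X_i\Big),
\end{equation*}
where I use that $\mathrm{Exp}$ is a homomorphism from the additive to the multiplicative formal group on $\mathbb{Q}(q)[[X_1,\dots,X_n]]$ and that $\mathrm{Exp}(q^aX^\beta)=(1-q^aX^\beta)^{-1}$ for a monomial $X^\beta$ with $\beta\ne 0$.

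Finally, again because $\mathrm{Exp}$ sends sums to products, $U=\prod_{i=1}^nU_i=\mathrm{Exp}\big(\tfrac{1}{q-1}\sum_{i=1}^n qX_i\big)$, so $U$ has positivity property, with $v_{e_i}(q)=q$ for the standard basis vectors $e_i\in\mathbb{N}^n$ and $v_\alpha(q)=0$ otherwise; these are polynomials in $q$ with non-negative integer coefficients. Combined with the reduction of the first paragraph, this proves the lemma. I do not expect a genuine obstacle here: the only real content is recognizing the one-vertex series as the $q$-binomial sum, and the hypothesis ``enough loops'' is used exactly once, in the splitting $C(\Gamma)=I+B$ with $B$ non-negative, which is precisely what makes Theorem~\ref{M thm} applicable.
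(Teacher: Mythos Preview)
Your proof is correct and follows essentially the same approach as the paper: both split $C(\Gamma)=I+(C(\Gamma)-I)$, use the ``enough loops'' hypothesis to ensure $C(\Gamma)-I$ has non-negative entries so that Theorem~\ref{M thm} applies, and reduce to showing that $\sum_\alpha q^{\alpha\alpha^t}|\mathrm{GL}(\alpha,\mathbb{F}_q)|^{-1}X^\alpha=\mathrm{Exp}\big(\tfrac{q}{q-1}\sum_i X_i\big)$ via the one-variable identity (which the paper calls Heine's formula and you phrase as Euler's identity / the $q$-binomial theorem). The only cosmetic difference is that the paper presents the $q$-series computation first and the reduction second, whereas you reverse this order and pass explicitly through the infinite-product form.
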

\begin{proof}
One form of the Heine formula is the following identity in $\mathbb{Q}(q)[[X]]$:
$$
\sum_{m\ge 0} \frac {X^m} { (1-q)(1-q^2)\cdots (1-q^m)} = \mathrm{Exp}\Big(\frac{1} {1-q} X\Big).
$$
Substituting $q$ with $q^{-1}$ in the above identity, we get
\begin{equation}\label{Heine}
\sum_{m\ge 0} \frac {q^{m^2}} {|\mathrm{GL}(m,\mathbb{F}_q)|} X^m = \mathrm{Exp}\Big(\frac{q} {q-1} X\Big),
\end{equation}
where $|\mathrm{GL}(m,\mathbb{F}_q)|$ is treated as a polynomial function in $q$. Thus,
\begin{align*}
\sum_{\alpha\in\mathbb{N}^n} \frac{q^{\alpha \alpha^t}}{|\mathrm{GL}(\alpha, \mathbb{F}_q)|} X^\alpha &= 
	\sum_{(m_1,\cdots,m_n)\in\mathbb{N}^n}\! \frac{q^{m_1^2+\cdots+m_n^2}}{|\mathrm{GL}(m_1, \mathbb{F}_q)| \cdots |\mathrm{GL}(m_n, \mathbb{F}_q)|} X_1^{m_1} \cdots  X_n^{m_n} \\
&= \prod_{i=1}^n\Big(\sum_{m_i\ge 0} \frac{q^{m_i^2}} {|\mathrm{GL}(m_i,\mathbb{F}_q)|} X_i^{m_i}\Big) \\
&= \prod_{i=1}^n \mathrm{Exp}\Big(\frac{q} {q-1} X_i\Big) \\
&= \mathrm{Exp}\Big(\frac{q} {q-1} \big(X_1+ \cdots + X_n\big)\Big).
\end{align*}
Thus the series on the left hand side has the positivity property. Recall that $C(\Gamma) = (a_{ij})_{1\le i, j  \le n}$ is the companion matrix of $\Gamma$, where $a_{ij}$ is the number of arrows
from vertex $v_i$ to vertex $v_j$. We have
\begin{align*}
\sum_{\alpha\in\mathbb{N}^n} \frac{|\mathrm{R}_\Gamma(\alpha, \mathbb{F}_q)|}{|\mathrm{GL}(\alpha, \mathbb{F}_q)|} X^\alpha &= 
	\sum_{\alpha\in\mathbb{N}^n} \frac{q^{\alpha C(\Gamma) \alpha^t}}{|\mathrm{GL}(\alpha, \mathbb{F}_q)|} X^\alpha \\
&= \sum_{\alpha\in\mathbb{N}^n} \frac{q^{\alpha \alpha^t}}{|\mathrm{GL}(\alpha, \mathbb{F}_q)|}  q^{\alpha (C(\Gamma)-I) \alpha^t}X^\alpha.
\end{align*}
Since $\Gamma$ has enough loops, $C(\Gamma)-I$ must be a matrix with non-negative integer entries. This lemma now follows from Theorem \ref{M thm}.
\end{proof}

For any positive integer $m$, let $\mathcal{P}_{m}$ be the set of all partitions whose parts are less than or equal to $m$.

\begin{lem}\label{width 1}
We have following identity for any finite quiver  $\Gamma$:
\begin{equation}
\sum_{\alpha\in\mathbb{N}^n} \frac{|\mathrm{R}_\Gamma(\alpha, \mathbb{F}_q)|}{|\mathrm{GL}(\alpha, \mathbb{F}_q)|} X^\alpha
= \mathrm{Exp} \Big(\frac{1}{q-1} \sum_{\lambda_*\in(\mathcal{P}_1)^n} \!\!A_\Gamma(\lambda_*,q) X^{\lambda_*^\mathrm{(1)}} \Big).
\end{equation}
If  $\,\Gamma$ has enough loops, then $A_\Gamma(\lambda_*,q)$ are polynomials in $q$ with non-negative integer coefficients
for all $\lambda_*\in(\mathcal{P}_1)^n$.
\end{lem}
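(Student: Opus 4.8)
The plan is to obtain the displayed identity by applying a single coordinate specialization to both sides of the defining equation (\ref{def A_lambda}) for the refined Kac functions, and then to deduce the positivity statement by combining this identity with Lemma \ref{positivity lemma} and the uniqueness remark following the definition of the positivity property.

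First I would introduce the continuous $\mathbb{Q}(q)$-algebra homomorphism $\phi\colon \mathbb{Q}(q)[[X_{ik}]]_{1\le i\le n,\,k\ge 1}\to\mathbb{Q}(q)[[X_1,\dots,X_n]]$ determined by $X_{i1}\mapsto X_i$ and $X_{ik}\mapsto 0$ for $k\ge 2$. The key point is that $\phi$ commutes with all Adams operations $\psi_m$ (they agree on the topological generators $X_{ik}$ and on $\mathbb{Q}(q)$), hence with $\mathrm{Exp}$ and $\mathrm{Log}$. I would then evaluate $\phi$ on $Q_\Gamma$ from (\ref{def Q}): in a monomial $X^{\alpha_*}=\prod_{i,k}X_{ik}^{k\alpha^k_i}$ the variable $X_{ik}$ with $k\ge 2$ occurs to the power $k\alpha^k_i$, so the monomial survives $\phi$ precisely when $\alpha^k=0$ for all $k\ge 2$; together with the requirement $\alpha^r\ne 0$ this forces $r=1$, in which case $\beta^1=\alpha^1$ and the prefactor $q^{\langle\alpha^1,\alpha^1\rangle}/q^{\langle\beta^1,\beta^1\rangle}$ equals $1$. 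Hence $\phi(Q_\Gamma)=\sum_{\alpha\in\mathbb{N}^n}|\mathrm{R}_\Gamma(\alpha,\mathbb{F}_q)|/|\mathrm{GL}(\alpha,\mathbb{F}_q)|\,X^\alpha$. On the right-hand side of (\ref{def A_lambda}), the monomial $X^{\lambda_*^{\mathrm{v}}}=\prod_{i,k}X_{ik}^{k\,m^{(k)}_{\lambda^i}}$ survives $\phi$ exactly when $m^{(k)}_{\lambda^i}=0$ for all $i$ and all $k\ge 2$, i.e.\ when every $\lambda^i$ has all its parts equal to $1$, i.e.\ when $\lambda_*\in(\mathcal{P}_1)^n$; and then $\phi(X^{\lambda_*^{\mathrm{v}}})=\prod_{i}X_i^{|\lambda^i|}$. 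Applying $\phi$ to (\ref{def A_lambda}), dividing by $q-1$, and applying $\mathrm{Exp}$ yields the claimed identity.

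For the positivity statement I would note that $\lambda_*\mapsto(|\lambda^1|,\dots,|\lambda^n|)$ is a bijection from $(\mathcal{P}_1)^n\setminus\{0\}$ onto $\mathbb{N}^n\setminus\{0\}$, with inverse $\alpha\mapsto([1^{\alpha_1}],\dots,[1^{\alpha_n}])$, so the identity just proved expresses $\sum_{\alpha}|\mathrm{R}_\Gamma(\alpha,\mathbb{F}_q)|/|\mathrm{GL}(\alpha,\mathbb{F}_q)|\,X^\alpha$ as $\mathrm{Exp}\big(\tfrac1{q-1}\sum_\alpha A_\Gamma(\lambda_*(\alpha),q)X^\alpha\big)$ with $\lambda_*(\alpha)=([1^{\alpha_1}],\dots,[1^{\alpha_n}])$. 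If $\Gamma$ has enough loops, Lemma \ref{positivity lemma} says this series has the positivity property, so it also equals $\mathrm{Exp}\big(\tfrac1{q-1}\sum_\alpha v_\alpha(q)X^\alpha\big)$ for unique polynomials $v_\alpha(q)$ with non-negative integer coefficients; comparing the two expressions and invoking that uniqueness gives $A_\Gamma(\lambda_*,q)=v_{(|\lambda^1|,\dots,|\lambda^n|)}(q)$ for all $\lambda_*\in(\mathcal{P}_1)^n$. None of this is deep: the substantive input — positivity of the ``fractional'' series — is imported from Lemma \ref{positivity lemma} (hence ultimately from Theorem \ref{M thm}), and the only place demanding care is the bookkeeping in the second paragraph, namely determining exactly which monomials survive $\phi$ on each side, checking that the prefactor collapses to $1$, and justifying that $\mathrm{Log}$ commutes with the coordinate specialization $\phi$ so that $\phi$ may be pushed through it in (\ref{def A_lambda}).
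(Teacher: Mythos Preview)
Your proposal is correct and follows essentially the same approach as the paper: specialize $X_{ik}\mapsto 0$ for $k\ge 2$ and $X_{i1}\mapsto X_i$ in equations (\ref{def Q}) and (\ref{def A_lambda}), then invoke Lemma \ref{positivity lemma}. The paper's own proof is a two-sentence sketch of exactly this; you have simply filled in the bookkeeping (which monomials survive, why the prefactor collapses) and made explicit the one technical point the paper leaves implicit, namely that the specialization homomorphism commutes with the Adams operations and hence with $\mathrm{Log}$.
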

\begin{proof}
The first statement is a consequence of equation  (\ref{def Q}) and (\ref{def refined kac exp}) by setting $X_{ik}=0$ for $1\le i\le n$ and $k\ge 2$
and replacing $X_{i1}$ by $X_i$ for $1\le i \le n$. 
The second statement is a consequence of Lemma \ref{positivity lemma}.
\end{proof}

For a fixed positive integer $m$, we define a formal power series $Q^m_{\Gamma}(q, X_{ik})$ by letting $X_{ik}=0$ for all $1\le i \le n$ and $k > m$ in equation (\ref{def Q}). Thus we have
\begin{equation}\label{def Q_Gamma^m}
Q^m_{\Gamma}(q, X_{ik})_{1\le i \le n, 1\le k \le m} = \sum_{\alpha_*} \!
\Big(\prod_{k=1}^m \frac{q^{\langle \alpha^k, \alpha^k\rangle}} {q^{\langle \beta^k, \beta^k \rangle}}  \frac{|\mathrm{R}_\Gamma(\alpha^k, \mathbb{F}_q)|}{|\mathrm{GL}(\alpha^k, \mathbb{F}_q)|}\Big) X^{\alpha_*},
\end{equation}
where the sum runs over all tuples of dimension vectors  $\alpha_*=(\alpha^1,\alpha^2,\cdots, \alpha^m)$, $\alpha^k\in\mathbb{N}^n$ for $1\le k \le m$, 
and $\beta^k=\sum_{i\ge k} \alpha^i$ for $1\le k \le m$.

Since $\langle \alpha^k, \alpha^k\rangle =  \alpha^k (I-C(\Gamma)) (\alpha^k)^t$, $\langle \beta^k, \beta^k\rangle =  \beta^k (I-C(\Gamma)) (\beta^k)^t$ and 
$|\mathrm{R}_\Gamma(\alpha^k, \mathbb{F}_q)|$ equals $q^{\alpha^k C(\Gamma) (\alpha^k)^t}$, we have
\begin{equation}\label{quadrtic}
\log_q\!\Big(\prod_{k=1}^m \frac{q^{\langle \alpha^k, \alpha^k\rangle}} {q^{\langle \beta^k, \beta^k \rangle}}  |\mathrm{R}_\Gamma(\alpha^k, \mathbb{F}_q)|\Big) 
= \sum_{k=1}^m\alpha^k (\alpha^k)^t + \beta^k (C(\Gamma)-I) (\beta^k)^t.
\end{equation}
Suppose that $\alpha^k = (\alpha_1^k, \cdots, \alpha_n^k) \in \mathbb{N}^n$ for $1\le k \le m$.
It is evident that the right hand side of identity (\ref{quadrtic}) is a quadratic form in entries $\alpha_i^k$ for $1\le i \le n$ and $1\le k \le m$. 

We construct a quiver $\Gamma_{\!m}$ when $\Gamma$ has enough loops as follows: $\Gamma_{\!m}$ has $mn$ vertices $v_i^k $ for $1\le i \le n$ and $1\le k \le m$.
A linear order $\prec$ on the vertices is defined by setting $v_i^k \prec v_s^t$ if either $k < t$ or $k=t$ and $i < s$.
The number of arrows from vertex $v_i^k$ to vertex $v_s^t$ is the coefficient of $\alpha_i^k \alpha_s^t$ in the quadratic form (\ref{quadrtic}) if $v_i^k \preceq v_s^t$, zero otherwise.
This is well defined because $C(\Gamma)-I$ is a matrix with non-negative integer entries.
The term $\sum_{k=1}^m\alpha^k (\alpha^k)^t$ in equation (\ref{quadrtic}) implies that $\Gamma_{\!m}$ has enough loops.
Let $C(\Gamma_{\!m})$ be the companion matrix of $\Gamma_{\!m}$, then we have 
\begin{equation}\label{new matrix}
\sum_{k=1}^m\alpha^k (\alpha^k)^t + \beta^k (C(\Gamma)-I) (\beta^k)^t = \gamma C(\Gamma_{\!m}) \gamma^t,
\end{equation}
where $\gamma = (\alpha_i^k)_\prec$ for $1\le i \le n$ and $1\le k \le m$. 
\begin{table}[h]
\parbox{.45\linewidth} {
\centering
\textbf{Vertex arrangement of $\Gamma _m$}
\begin{tabular}{c|cccc}
\toprule
\multicolumn{1}{c}{} & \multicolumn{4}{c}{\textrm{Vertex $i$ of $\Gamma$}}  \\
\cmidrule(rl){2-5} 
\textrm{Part $k$} & {$1$} & {$2$} & {$\cdots$} & {$n$}  \\
\midrule
$1$ & $v_1^1$ & $v_2^1$ & $\cdots$ & $v_n^1$ \\
$2$ & $v_1^2$ & $v_2^2$ & $\cdots$ & $v_n^2$ \\
$\vdots$ & $\vdots$ & $\vdots$ & $\vdots$ & $\vdots$  \\
$m$ & $v_1^m$ & $v_2^m$ & $\cdots$ & $v_n^m$ \\
\bottomrule
\end{tabular}
}
\hfill
\parbox{.45\linewidth} {
\centering
\textbf{$\alpha_*$ distributed over $\Gamma _m$}
\begin{tabular}{c|cccc}
\toprule
\multicolumn{1}{c}{} & \multicolumn{4}{c}{\textrm{Vertex $i$ of $\Gamma$}}  \\
\cmidrule(rl){2-5} 
\textrm{Part $k$} & {$1$} & {$2$} & {$\cdots$} & {$n$}  \\
\midrule
$1$ & $\alpha_1^1$ & $\alpha_2^1$ & $\cdots$ & $\alpha_n^1$ \\
$2$ & $\alpha_1^2$ & $\alpha_2^2$ & $\cdots$ & $\alpha_n^2$ \\
$\vdots$ & $\vdots$ & $\vdots$ & $\vdots$ & $\vdots$  \\
$m$ & $\alpha_1^m$ & $\alpha_2^m$ & $\cdots$ & $\alpha_n^m$ \\
\bottomrule
\end{tabular}
}
\end{table}

For any $r\in\mathbb{N}$, let $[1^r]$ be the partition which has $r$ parts with each part equal to $1$. Recall that for a partition $\lambda\in\mathcal{P}$, $m_\lambda^{(k)}$
is the number of parts equal to $k$ in $\lambda$. 
Define a map $\tau_m$ as follows:
\begin{align*}
\tau_m: \hspace{9 mm} (\mathcal{P}_m)^n \hspace{3 mm}&\to \hspace{3 mm} (\mathcal{P}_1)^{mn} \\
 (\lambda^1,\cdots,\lambda^n) \hspace{3 mm} & \mapsto \hspace{3 mm} ([1^{\alpha_i^k}])_{\prec}, \textrm{ where $\alpha_i^k = m_{\lambda^i}^{(k)}$.}
\end{align*}
For example, when $n=3$ and $m=2$ we have:
$$
\tau_2(([2,1,1],[2,2,1],[2,2,2])) = ([1,1],[1],[0],[1],[1,1],[1,1,1]).
$$
In the notation of multiplicity vectors, $\tau_2: ((2,1,0), (1,2,3)) \mapsto (2,1,0, 1,2,3)$. It is evident that $\tau_m$ is a one-to-one map.

\begin{thm}\label{positivity thm}
With the notation above and assuming that $\Gamma$ has enough loops, we have the following identity for any $\lambda_*\in(\mathcal{P}_m)^n$:
$$
A_\Gamma(\lambda_*, q) = A_{\Gamma_{\!m}}\!(\tau_m(\lambda_*),q).
$$
And hence, for any $\lambda_*\in\mathcal{P}^n$, $A_\Gamma(\lambda_*, q)$ is a polynomial in $q$ with non-negative integer coefficients.
\end{thm}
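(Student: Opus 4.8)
The plan is to obtain the identity $A_\Gamma(\lambda_*,q)=A_{\Gamma_m}(\tau_m(\lambda_*),q)$ by writing the truncated series $Q^m_\Gamma$ in two ways, and then to deduce polynomiality and positivity by choosing $m$ larger than every part occurring in $\lambda_*$ and applying Lemma~\ref{width 1} to $\Gamma_m$.

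\emph{First, identify the coefficients of $Q^m_\Gamma$.} Fix a tuple $\alpha_*=(\alpha^1,\dots,\alpha^m)$ with $\alpha^k=(\alpha^k_1,\dots,\alpha^k_n)$ and set $\gamma=(\alpha^k_i)_\prec\in\mathbb{N}^{nm}$. Combining equations (\ref{quadrtic}) and (\ref{new matrix}) shows that the numerator $\prod_{k=1}^m q^{\langle\alpha^k,\alpha^k\rangle-\langle\beta^k,\beta^k\rangle}\,|\mathrm{R}_\Gamma(\alpha^k,\mathbb{F}_q)|$ equals $q^{\gamma C(\Gamma_m)\gamma^t}=|\mathrm{R}_{\Gamma_m}(\gamma,\mathbb{F}_q)|$, while the denominator $\prod_{k=1}^m|\mathrm{GL}(\alpha^k,\mathbb{F}_q)|$ equals $|\mathrm{GL}(\gamma,\mathbb{F}_q)|$. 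Since $X^{\alpha_*}=\prod_{i,k}X_{ik}^{k\alpha^k_i}$, it follows that $Q^m_\Gamma(q,X_{ik})$ is obtained from the width-one series $\sum_{\gamma\in\mathbb{N}^{nm}}\frac{|\mathrm{R}_{\Gamma_m}(\gamma,\mathbb{F}_q)|}{|\mathrm{GL}(\gamma,\mathbb{F}_q)|}Y^\gamma$ of Lemma~\ref{width 1} for the quiver $\Gamma_m$ (whose vertices $v^k_i$ carry variables $Y_{v^k_i}$) by the substitution $Y_{v^k_i}\mapsto X_{ik}^k$.

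\emph{Second, apply $\mathrm{Log}$.} The substitution $Y_{v^k_i}\mapsto X_{ik}^k$ sends each variable to a power of a single new variable and fixes $q$, hence commutes with the Adams operations $q\mapsto q^j$, $X\mapsto X^j$ that define the plethystic operations, and therefore commutes with $\mathrm{Log}$ and $\mathrm{Exp}$. Applying Lemma~\ref{width 1} for $\Gamma_m$ and then this substitution gives
$(q-1)\mathrm{Log}(Q^m_\Gamma)=\sum_{\mu_*\in(\mathcal{P}_1)^{nm}\backslash\{0\}}A_{\Gamma_m}(\mu_*,q)\prod_{i,k}X_{ik}^{k\,m^{(1)}_{\mu^{v^k_i}}}$, where $\mu_*=(\mu^{v^k_i})_\prec$. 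Re-indexing along the bijection $\tau_m\colon(\mathcal{P}_m)^n\to(\mathcal{P}_1)^{nm}$ and using $m^{(1)}_{[1^{m^{(k)}_{\lambda^i}}]}=m^{(k)}_{\lambda^i}$, the monomial attached to $\tau_m(\lambda_*)$ becomes $\prod_{i,k}X_{ik}^{k\,m^{(k)}_{\lambda^i}}=X^{\lambda_*^{\mathrm v}}$. On the other hand $Q^m_\Gamma$ is $Q_\Gamma$ with $X_{ik}=0$ for $k>m$, a substitution that likewise commutes with $\mathrm{Log}$; by Definition~\ref{def refine} it kills exactly the terms $X^{\lambda_*^{\mathrm v}}$ with $\lambda_*\notin(\mathcal{P}_m)^n$, so $(q-1)\mathrm{Log}(Q^m_\Gamma)=\sum_{\lambda_*\in(\mathcal{P}_m)^n\backslash\{0\}}A_\Gamma(\lambda_*,q)X^{\lambda_*^{\mathrm v}}$. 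Since $\lambda_*\mapsto X^{\lambda_*^{\mathrm v}}$ is injective (the exponent of $X_{ik}$ recovers $m^{(k)}_{\lambda^i}$, hence $\lambda^i$), comparing coefficients yields $A_\Gamma(\lambda_*,q)=A_{\Gamma_m}(\tau_m(\lambda_*),q)$ for every $\lambda_*\in(\mathcal{P}_m)^n$.

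\emph{Third, conclude positivity.} Given an arbitrary $\lambda_*\in\mathcal{P}^n$, take $m$ at least as large as the largest part of any component $\lambda^i$, so $\lambda_*\in(\mathcal{P}_m)^n$. Since $\Gamma$ has enough loops, so does $\Gamma_m$ by its construction, and $\tau_m(\lambda_*)\in(\mathcal{P}_1)^{nm}$; hence the second statement of Lemma~\ref{width 1}, applied to $\Gamma_m$, shows $A_{\Gamma_m}(\tau_m(\lambda_*),q)$ is a polynomial in $q$ with non-negative integer coefficients, and the identity just proved transfers this to $A_\Gamma(\lambda_*,q)$. The step I expect to be the crux is the pair of power-series identities above: verifying that the plethystic $\mathrm{Log}$ commutes with the monomial substitution $Y_{v^k_i}\mapsto X_{ik}^k$ and with the truncation $X_{ik}\mapsto 0$, and matching the exponent bookkeeping so that the monomial for $\tau_m(\lambda_*)$ in the $\Gamma_m$-expansion is precisely $X^{\lambda_*^{\mathrm v}}$ — everything in equations (\ref{quadrtic})--(\ref{new matrix}) and in the definition of $\tau_m$ is arranged exactly to make this go through.
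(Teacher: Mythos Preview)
Your proposal is correct and follows essentially the same route as the paper: you identify $Q^m_\Gamma$ with the width-one series of $\Gamma_m$ under the monomial substitution $Y_{v^k_i}\mapsto X_{ik}^k$ via equations~(\ref{quadrtic})--(\ref{new matrix}), apply $\mathrm{Log}$ to both descriptions (using that plethystic operations commute with monomial substitutions and with setting variables to zero), match monomials through the bijection $\tau_m$, and then invoke Lemma~\ref{width 1} for $\Gamma_m$. The only difference is that you spell out more explicitly why $\mathrm{Log}$ commutes with the substitutions and why the monomial map $\lambda_*\mapsto X^{\lambda_*^{\mathrm v}}$ is injective, points the paper leaves implicit.
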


\begin{proof}
It follows from equation (\ref{def Q_Gamma^m}) (\ref{quadrtic}) and (\ref{new matrix}) that
\begin{align*}
Q^m_{\Gamma}(q, X_{ik})_{1\le i \le n, 1\le k \le m} &= \sum_{\alpha=(\alpha_i^k)_{\prec}} \!\! \frac{q^{\alpha C(\Gamma_{\!m}) \alpha^t}}{|\mathrm{GL}(\alpha, \mathbb{F}_q)|}
\prod_{i=1}^n\prod_{k=1}^m X_{ik}^{k\alpha_i^k} \\
&=\sum_{\alpha=(\alpha_i^k)_{\prec}} \!\! \frac{|\text{R}_{\Gamma_{\!m}}\!(\alpha, \mathbb{F}_q)|}{|\mathrm{GL}(\alpha, \mathbb{F}_q)|}
\prod_{i=1}^n\prod_{k=1}^m X_{ik}^{k\alpha_i^k} \\
&\overset{(\ref{def Q_Gamma^m})}{=} Q^1_{\Gamma_{\!m}}\!(q, X_{ik})_{1\le i \le n, 1\le k \le m \,|\/ X_{ik} = X_{ik}^k} .
\end{align*}

It follows from Definition \ref{def refine} that
\begin{align*}
(q-1)\mathrm{Log}\big(Q^m_{\Gamma}(q, X_{ik})_{1\le i \le n, 1\le k \le m}\big) &= \sum_{\lambda_*\in (\mathcal{P}_m)^n} \!A_\Gamma(\lambda_*,q) X^{\lambda_*^{\mathrm{v}}} ,\\
(q-1)\mathrm{Log}\big(Q^1_{\Gamma_{\!m}}\!(q, X_{ik})_{1\le i \le n, 1\le k \le m}\big) &=  \sum_{\lambda_*\in (\mathcal{P}_1)^{mn}} \!A_{\Gamma_{\!m}}\!(\lambda_*,q) X^{\lambda_*^{\mathrm{(1)}}}.
\end{align*}
Thus we have 
$$ 
\sum_{\lambda_*\in (\mathcal{P}_m)^n} \!\!A_\Gamma(\lambda_*,q) X^{\lambda_*^{\mathrm{v}}} 
=  \sum_{\lambda_*\in (\mathcal{P}_1)^{mn}} \!\!A_{\Gamma_{\!m}}\!(\lambda_*,q) X^{\lambda_*^{\mathrm{(1)}}} | _{X_{ik} = X_{ik}^k}.
$$
By comparing the coefficients of $X^{\lambda_*^{\mathrm{v}}}$ for $\lambda_*\in (\mathcal{P}_m)^n$ on both sides, we have
$$
A_\Gamma(\lambda_*, q) = A_{\Gamma_{\!m}}\!(\tau_m(\lambda_*),q).
$$
Since $\Gamma_{\!m}$ has enough loops and $\tau_m(\lambda_*)\in(\mathcal{P}_1)^{mn}$, Lemma \ref{width 1} implies that 
$A_\Gamma(\lambda_*, q)$ are polynomials in $q$ with non-negative integer coefficients for all $\lambda_*\in (\mathcal{P}_m)^n$.
Since $m$ can be any positive integer, the same conclusion must be true for any $\lambda_*\in\mathcal{P}^n$.
\end{proof}

It is worth noticing that Theorem \ref{positivity thm} does not depend on the validity of Kac Conjecture 1.1.

Let $g$ be a positive integer and $\Gamma$ the $g$-loop quiver whose companion matrix is $[\,g\,]$.  As $\langle(a),(a)\rangle = (1-g)a^2$ and $|\mathrm{R}_{\Gamma}(a, \mathbb{F}_q)| = q^{ga^2}$ for $a\in \mathbb{N}$, 
the quiver $\Gamma_2$ can be found as follows:
\begin{align*}
&\hspace{6mm}Q^2_{\Gamma}(q, X_{11}, X_{12})  \\
= & 
\sum_{(m_1,m_2)\in\mathbb{N}^2} \frac{q^{(1-g)m_1^2}}{q^{(1-g)(m_1+m_2)^2}} \frac{q^{(1-g)m_2^2}}{q^{(1-g)m_2^2}} 
\frac{q^{gm_1^2}}{|\mathrm{GL}(m_1, \mathbb{F}_q)|} \frac{q^{gm_2^2}}{|\mathrm{GL}(m_2, \mathbb{F}_q)|} X_{11}^{m_1}X_{12}^{2m_2} \\
= &
\sum_{(m_1,m_2)\in\mathbb{N}^2} \frac{q^{gm_1^2 + (2g-1)m_2^2 + 2(g-1) m_1m_2}}{|\mathrm{GL}((m_1,m_2), \mathbb{F}_q)|} X_{11}^{m_1}X_{12}^{2m_2}.
\end{align*}
Thus we have the following:
\begin{center} \label{table5}
\def\arraystretch{1.5}
\begin{tabular}{c|c|c} 
	\hline
	 Companion matrix of $\Gamma_2$ & $\text{\hspace{3 mm}}$ Quiver $\Gamma$ ($g=2$) $\text{\hspace{3 mm}}$ 
	 & $\text{\hspace{3 mm}}$ Quiver $\Gamma_2$ ($g=2$) $\text{\hspace{3 mm}}$ \\  
	\hline
	$\begin{bmatrix}
	g & 2(g-1)\\
	0 & 2g-1
	\end{bmatrix}$
	& 
	\begin{tikzcd}
	\bullet \arrow[out=120,in=60,loop,swap] \arrow[out=240,in=300,loop,swap] 
	\end{tikzcd} 
	& 
	\begin{tikzcd}
	\bullet \arrow[r, yshift=-0.4ex] \arrow[r, yshift=0.4ex] \arrow[out=120,in=60,loop,swap] \arrow[out=240,in=300,loop,swap] & 
	\bullet \arrow[out=30,in=-30,loop,swap] \arrow[out=150,in=90,loop,swap] \arrow[out=210,in=270,loop,swap]
	\end{tikzcd}
	\\
	\hline
\end{tabular}
\end{center}

Computer programs have confirmed the following relationships:
\begin{align*}
&A_\Gamma([1],q) &=& A_{\Gamma_2}(([1],[0]),q) \!\!\!\!\! &=&\, q^{g}, \\
&A_\Gamma([1,1],q) &=& A_{\Gamma_2}(([1,1],[0]),q) \!\!\!\!\! &=& \,\frac{q^{2g+1}(q^{2g-2} - 1)} {q^2-1}, \\
&A_\Gamma([2],q) &=& A_{\Gamma_2}(([0],[1]),q) \!\!\!\!\! &=& \, q^{2g-1}, \\
&A_\Gamma([2,1],q) &=& A_{\Gamma_2}(([1],[1]),q) \!\!\!\!\! &=&  \, \frac{q^{3g-1}(q^{2g-2} - 1)} {q-1}, \\
&A_\Gamma([2,2],q) &=& A_{\Gamma_2}(([0],[1,1]),q) \!\!\!\!\! &=&  \, \frac{q^{4g-1}(q^{4g-4} - 1)} {q^2-1}, \\
&A_\Gamma([2,1,1],q)\!\!\!\!\!\!&=& A_{\Gamma_2}(([1,1],[1]),q) \!\!\!\!\! &=& \,\frac{q^{4g-1}(q^{6g-5} -2q^{2g-1} -q^{2g-2} +q+1)} {(q^2-1)(q-1)}. \\
\end{align*}
When $g=1$, $\Gamma$ is known as the Jordan quiver and its representations are completely classified. We have 
\[
A_\Gamma(\lambda,q) = \begin{cases}
q & \text{if $\lambda = [n]$ for $n\ge 1$,}\\
0 & \text{otherwise.} \\
\end{cases}
\]

\section{A conjectural interpretation of the refined Kac polynomials}
A representation $M$ of a quiver $\Gamma$ over a field $\mathbb{F}$ is called a \textit{brick} if its endomorphism algebra $\textrm{End}_\mathbb{F}(M)$ is a division algebra. 
Bricks are also known as Schur representations. A brick $M$ is called an \textit{absolute brick} if $\textrm{End}_\mathbb{F}(M)$ is isomorphic to $\mathbb{F}$. 
Thus a brick is alway indecomposable and an absolute brick is always absolutely indecomposable.

Let $\overline{\mathbb{F}}$ be the algebraic closure of $\mathbb{F}$. Then all representations of $\Gamma$ over $\overline{\mathbb{F}}$ with a given dimension vector $\alpha$ form an affine
variety over $\overline{\mathbb{F}}$ and all bricks of $\Gamma$ over $\overline{\mathbb{F}}$ with dimension vector $\alpha$ form a constructible set. The latter is denoted by 
$V_\Gamma(\alpha, \overline{\mathbb{F}})$. 

A representation $M$ of $\Gamma$ over $\overline{\mathbb{F}}$ with dimension vector $\alpha$ 
is called a \textit{block} if $M$ is indecomposable and its orbit lies in the closure of $V_\Gamma(\alpha, \overline{\mathbb{F}})$ under the Zariski topology. A representation $M$ of a quiver $\Gamma$ over a field $\mathbb{F}$ is called 
an \textit{absolute block} if $M\otimes_\mathbb{F} \overline{\mathbb{F}}$ is a block of $\Gamma$ over $\overline{\mathbb{F}}$. An absolute block is apparently absolutely indecomposable. By the definition of blocks, all blocks of 
a quiver over an algebraically closed field with a given dimension vector form an algebraic variety.

For example, let $\Gamma$ be the 2-loops quiver $\circlearrowleft \!\!\!\bullet \!\!\! \circlearrowright$ and $V$ be the following set, where all parameters belong to $\overline{\mathbb{F}}$:
\begin{align*}
	\Bigg\{         
	g^{-1}\Bigg(\begin{bmatrix}
	\lambda & 0\\
	0 & \mu
	\end{bmatrix},
	\begin{bmatrix}
	\sigma & \tau\\
	\delta & \eta
	\end{bmatrix} \Bigg) g \, \Big |
	\lambda \ne \mu, \tau \ne 0 \textrm{ or }\delta \ne 0, g\in\mathrm{GL}(2,\mathbb{\overline{\mathbb{F}}})
	\Bigg\}.
\end{align*}
Then every representation in $V$ is a brick of $\Gamma$ over $\overline{\mathbb{F}}$ because its endomorphism algebra is isomorphic to $\overline{\mathbb{F}}$. And hence, every representation of the following form 
\begin{align*}     
	\Bigg(\begin{bmatrix}
	\lambda & 0\\
	0 & \lambda
	\end{bmatrix},
	\begin{bmatrix}
	\sigma & 1\\
	0 & \sigma
	\end{bmatrix} \Bigg)_{\lambda,\sigma\in\overline{\mathbb{F}}}
\end{align*}
is a block of $\Gamma$ over $\overline{\mathbb{F}}$, because it is indecomposable and its orbit lies in the closure of $V$.

Let $B_\Gamma(\alpha, q)$ be the number isomorphism classes of absolute blocks of $\Gamma$ over the finite field $\mathbb{F}_q$ with dimension vector $\alpha$.
\begin{cjc}\label{conj}
With the notation above and assuming that $\Gamma$ has enough loops, the following formal identity holds:
\begin{align*}
\sum_{\alpha\in\mathbb{N}^n} \frac{|\mathrm{R}_\Gamma(\alpha, \mathbb{F}_q)|}{|\mathrm{GL}(\alpha, \mathbb{F}_q)|} X^\alpha
= \mathrm{Exp} \Big(\frac{1}{q-1} \sum_{\alpha\in\mathbb{N}^n\backslash\{0\}} \!\!B_\Gamma(\alpha,q) X^{\alpha} \Big).
\end{align*}
\end{cjc}
If $\Gamma$ is the Jordan quiver, then the Jordan normal form theorem implies that all bricks of $\Gamma$ over $\overline{\mathbb{F}}_q$ are one dimensional, and hence blocks and bricks of $\Gamma$ over $\overline{\mathbb{F}}_q$ are identical. 
We have $B_\Gamma(1, q)=q$. Thus, for the Jordan quiver Conjecture \ref{conj} is equivalent to the Heine formula (\ref{Heine}). 
If Conjecture \ref{conj} is true in general, then Lemma \ref{width 1} implies that $B_\Gamma(\alpha,q)$ are polynomials in $q$ with non-negative integer coefficients.
\begin{cor} With the notation above and assuming that Conjecture \ref{conj} is true and $\Gamma$ has enough loops, we have the following identity for any positive integer $m$ and 
any $\lambda_*\in(\mathcal{P}_m)^n$:
\begin{align*}
A_\Gamma(\lambda_*, q) = B_{\Gamma_{\!m}}\!(\tau_m(\lambda_*),q).
\end{align*}
\end{cor}
Theorem \ref{positivity thm} and Conjecture \ref{conj} suggest that the study of representations or at least the study of numbers of representations of a quiver with enough loops 
can be reduced to the study of blocks of quivers with enough loops and it can be further reduced to the study of bricks of quivers with enough loops.

\vspace{0.2cm}
\textbf{\large{Acknowledgments}}
\vspace{0.1cm}

Special thanks are due to my PhD thesis advisors Peter Donovan and Jie Du for leading me into the wonderful world of representation theory of quivers over finite fields. My sincere thanks go to 
Shaoxue Liu, Yingbo Zhang, Jie Xiao, Bangming Deng and Xueqing Chen for their endless supports, encouragements and friendships over the years.

\vspace{5mm}
\textbf{\large{Appendix I. Proof of Theorem \ref{hua thm}} }
\vspace{2mm}

First, we recall some notation from Hua \cite{JH 2000}. For a partition $\lambda = [\lambda_1,\lambda_2,\cdots] \in \mathcal{P}$, let $\lambda' = [\lambda'_1, \lambda'_2,\cdots] \in \mathcal{P}$ be its conjugate partition. Thus, 
$\lambda'_k = \sum_{i\ge k}m_\lambda^{(i)}$ for $k\ge 1$. Let $\mu=[\mu_1, \mu_2, \cdots]\in \mathcal{P}$ be another partition and $\mu'=[\mu'_1, \mu'_2, \cdots]\in \mathcal{P}$ be its conjuagte partition, 
the ``inner product'' of $\lambda$ and $\mu$ is defined as follows:
\begin{equation}\label{partition inner prod}
\langle\lambda, \mu\rangle = \sum_{i\ge 1}\lambda_i'\mu_i'.
\end{equation}
Let $\varphi_m(q)=(1-q)(1-q^2)\cdots(1-q^m)$ for $m\ge 1$ and $\varphi_0(q)=1$. For $\lambda\in\mathcal{P}$, let $b_\lambda(q) = \prod_{i\ge1}\varphi_{m_i}(q)$ where 
$m_i = m_\lambda^{(i)}$ for $i\ge 1$.

The formal power series $P_\Gamma(X_1,\cdots,X_n,q)$ appeared in Hua \cite{JH 2000} is defined as follows:
\begin{equation}\label{def PX original}
P_\Gamma(X_1,\cdots,X_n,q) := \sum_{\pi_1,\cdots,\pi_n\in\mathcal{P}} \!
\frac{q^{\sum_{1\le i, j \le n}\!a_{ij}\langle\pi_i, \pi_j\rangle} }
{\prod_{1\le i \le n}\!q^{\langle \pi_i, \pi_i\rangle} b_{\pi_i}\!(q^{-1})}X_1^{|\pi_1|}\cdots X_n^{|\pi_n|}.
\end{equation}
We define rational functions $J_\Gamma(\alpha,q)\in\mathbb{Q}(q)$ for all $\alpha\in\mathbb{N}^n\backslash\{0\}$ as follows:
\begin{equation}\label{new def H} 
\log\left(P_\Gamma(X_1,\cdots,X_n,q) \right) = \sum_{\alpha\in\mathbb{N}^n\backslash\{0\}} \!J_\Gamma(\alpha,q)X^\alpha ,
\end{equation}
where the $\log$ is the formal logarithm, i.e., $\log(1+X) = \sum_{i\ge 1} (-1)^{i-1} X^i/i$. Thus we have $J_\Gamma(\alpha,q) = H_\Gamma(\alpha,q)/\bar{\alpha}$,
where $\bar{\alpha}=\gcd(\alpha_1, \cdots, \alpha_n)$ and $H_\Gamma(\alpha,q)$ is the rational function defined on page 1021 of Hua \cite{JH 2000}.

With the notation above, Theorem 4.6 of Hua \cite{JH 2000} is equivalent to the following identity:
\begin{equation}\label{A=J}
A_\Gamma(\alpha,q) = (q-1)\sum_{d\,|\,\bar{\alpha}}\frac{\mu(d)}{d}J_\Gamma\Big(\frac{\alpha}{d}, q^d\Big),
\end{equation}
where the sum runs over all divisors of $\bar{\alpha}$ and $\mu$ is the classical M\"obius function.

By the definition of the plethystic logarithm from Mozgovoy \cite{SM 2007}, we have
\begin{align*}
\mathrm{Log} (P_\Gamma(X_1,\cdots,X_n,q) )&= \sum_{k\ge 1} \frac{\mu(k)}{k} \log ( P_\Gamma(X_1^k,\cdots,X_n^k,q^k) ) \\
&\overset{(\ref{new def H})}{=}\sum_{k\ge 1} \frac{\mu(k)}{k} \sum_{\alpha\in\mathbb{N}^n\backslash\{0\}} \!J_\Gamma(\alpha,q^k)X^{k\alpha} \\
&= \sum_{\alpha\in\mathbb{N}^n\backslash\{0\}} \Big(\sum_{d\,|\,\bar{\alpha}}\frac{\mu(d)}{d}J_\Gamma\Big(\frac{\alpha}{d}, q^d\Big) \Big)X^\alpha \\
&\overset{(\ref{A=J})}{=} \frac{1}{q-1}  \sum_{\alpha\in\mathbb{N}^n\backslash\{0\}} A_\Gamma(\alpha,q)X^\alpha,
\end{align*}
thus, we have 
\begin{equation}
P_\Gamma(X_1,\cdots,X_n,q) = \,\,\text{Exp}\Big(\frac{1}{q-1} \sum_{\alpha\in\mathbb{N}^n\backslash\{0\}}\!A_\Gamma(\alpha,q)X^\alpha\Big).
\end{equation}

Thus, to prove Theorem \ref{hua thm}, it is sufficient to show that $P_\Gamma(X_1,\cdots,X_n, q)$ defined by equation (\ref{def PX original}) is equal to $P_\Gamma(q,X_1,\cdots,X_n)$ defined by equation (\ref{def PX}).
 
We start with  (\ref{def PX original}). Let $\pi_* = (\pi_1,\cdots,\pi_n) \in\mathcal{P}^n$ and $\alpha_* = \pi_*^\textrm{v}$. If we assume that $\alpha_* = (\alpha^1, \alpha^2, \cdots)$ and $\alpha^k = (\alpha_1^k, \cdots, \alpha_n^k) \in\mathbb{N}^n$, then we have
$\alpha_i^k = m_{\pi_i}^{(k)}$ for $1\le i \le n$ and $k\ge 1$.
Let $\beta_* = (\beta^1, \beta^2, \cdots)$ be a tuple of vectors such that $\beta^k = \sum_{s\ge k}\alpha^s$ for $k\ge 1$. If we assume that $\beta^k = (\beta_1^k, \cdots, \beta_n^k) \in\mathbb{N}^n$, then we have 
$\beta_i^k = \sum_{s\ge k}\alpha_i^s$ for $1\le i \le n$ and $k\ge 1$. Thus $[\beta_i^1, \beta_i^2, \cdots] = \pi'_i$ for $1\le i \le n$.
By (\ref{partition inner prod}), we have
$\langle \pi_i, \pi_j \rangle  = \sum_{k\ge 1} \beta_i^k \beta_j^k$ for $1\le i,j \le n$. 

Since
\begin{align*}
\sum_{1\le i,j \le n} a_{ij}\langle \pi_i, \pi_j \rangle - \sum_{1\le i \le n} \langle \pi_i, \pi_i \rangle  &= \sum_{1\le i,j\le n} a_{ij} \sum_{k\ge 1} \beta_i^k \beta_j^k - \sum_{1\le i \le n} \sum_{k\ge 1} \beta_i^k \beta_i^k\\
&= \sum_{k\ge 1} \Big(\sum_{1\le i,j\le n} a_{ij} \beta_i^k \beta_j^k - \sum_{1\le i \le n} \beta_i^k \beta_i^k \Big) \\
&\overset{(\ref{def euler form})}{=} -\sum_{k\ge 1} \langle\beta^k, \beta^k \rangle,
\end{align*}
we have
\begin{equation}\label{euler prod pi}
\frac{q^{\sum_{1\le i, j \le n}\!a_{ij}\langle\pi_i, \pi_j\rangle} } {\prod_{1\le i \le n}\!q^{\langle \pi_i, \pi_i\rangle} } =
\prod_{k \ge 1} \frac{1}{q^{ \langle\beta^k, \beta^k \rangle}}.
\end{equation}
Since
\begin{align*}
\prod_{1\le i \le n} b_{\pi_i}\!(q^{-1}) 
&= \prod_{1\le i \le n}\prod_{k\ge 1} (1-q^{-1})(1-q^{-2})\cdots (1-q^{-{\alpha_i^k}}) \\
&\overset{(\ref{order of GL})}{=} \prod_{1\le i \le n}\prod_{k\ge 1} q^{-(\alpha_i^k)^2} | \mathrm{GL}(\alpha_i^k, \mathbb{F}_q)| \\
&=\Big( \prod_{1\le i \le n}\prod_{k\ge 1} q^{-(\alpha_i^k)^2} \Big)  \Big(\prod_{k\ge 1}|\mathrm{GL}(\alpha^k, \mathbb{F}_q)| \Big), \\
\end{align*}
and 
\begin{align*}
\prod_{k\ge 1} q^{\langle \alpha^k, \alpha^k\rangle} |\mathrm{R}_\Gamma(\alpha^k, \mathbb{F}_q)| &\overset{(\ref{num of rep})}{=} \prod_{k\ge 1} q^{\alpha^k(I-C(\Gamma))(\alpha^k)^t}\cdot q^{\alpha^kC(\Gamma)(\alpha^k)^t} \\
&= \prod_{k\ge 1} q^{\alpha^k(\alpha^k)^t} \\
&= \prod_{k\ge 1} \prod_{1\le i \le n} q^{(\alpha_i^k)^2},
\end{align*}
we have
\begin{equation}\label{equation b_pi}
\frac{1}{\prod_{1\le i \le n} b_{\pi_i}\!(q^{-1})} = \prod_{k\ge 1} q^{\langle \alpha^k, \alpha^k\rangle} \frac{|\mathrm{R}_\Gamma(\alpha^k, \mathbb{F}_q)|} {|\mathrm{GL}(\alpha^k, \mathbb{F}_q)|}.
\end{equation}
Putting (\ref{euler prod pi}) and (\ref{equation b_pi}) together, we have
\begin{align*}
\frac{q^{\sum_{1\le i, j \le n}\!a_{ij}\langle\pi_i, \pi_j\rangle}} {\prod_{1\le i \le n}\!q^{\langle \pi_i, \pi_i\rangle} b_{\pi_i}\!(q^{-1})} 
= \prod_{k\ge 1} \frac{q^{\langle \alpha^k, \alpha^k\rangle}} {q^{\langle \beta^k, \beta^k \rangle}}  \frac{|\mathrm{R}_\Gamma(\alpha^k, \mathbb{F}_q)|}{|\mathrm{GL}(\alpha^k, \mathbb{F}_q)|}.
\end{align*}
This shows that the series $P_\Gamma(X_1,\cdots,X_n, q)$ defined by equation (\ref{def PX original}) is equal to the series $P_\Gamma(q,X_1,\cdots,X_n)$ defined by equation (\ref{def PX}) and this finishes the proof of Theorem \ref{hua thm}.

\vspace{3mm}
\textbf{\large{Appendix II. Examples of the Kac polynomials and the refined Kac functions}}
\vspace{2mm}

\begin{center} \label{table1}
	\def\arraystretch{1.2}
	\begin{tabular}{l|l} 
		\multicolumn{2}{c}{\textbf{Table 1:} Some Kac polynomials for the 2-loop quiver $\circlearrowleft\!\!\!\bullet \!\!\! \circlearrowright$}\\
		\hline
		$n$ & $A_\Gamma(n, q)$ \\
		\hline
		$1$ & $q^2$ \\
		$2$ & $q^5+q^3$ \\
		$3$ & $q^{10}+q^8+q^7+q^6+q^5+q^4$ \\
		$4$ & $q^{17}+q^{15}+q^{14}+2q^{13}+q^{12}+3q^{11}+2q^{10}+4q^9+2q^8+3q^7+q^6+q^5$ \\
		\hline
	\end{tabular}
\end{center}

\vspace{5mm}
\begin{center}\label{table2}
	\def\arraystretch{1.2}
	\begin{tabular}{l|l|l} 
		\multicolumn{3}{c}{\textbf{Table 2:} Some refined Kac polynomials for the 2-loop quiver $\circlearrowleft\!\!\!\bullet \!\!\! \circlearrowright$} \\
		\hline
		$\,\lambda$ & $\lambda^\mathrm{v}$ & $A_\Gamma(\lambda_, q)$ \\
		\hline
		$[1]$ & (1) & $q^2$ \\
		$[2]$ & (0,1) & $q^3$ \\
		$[1,1]$ & (2) & $q^5$ \\
		$[3]$ & (0,0,1) & $q^4$ \\
		$[2,1]$ & (1,1) & $q^6+q^5$ \\
		$[1,1,1]$ & (3) & $q^{10}+q^8+q^7$ \\
		$[4]$ & (0,0,0,1) & $q^5$ \\
		$[3,1]$ & (1,0,1) & $q^7+q^6$ \\
		$[2,2]$ & (0,2) & $q^9+q^7$ \\
		$[2,1,1]$ & (2,1) & $q^{11}+q^{10}+2q^9+2q^8+q^7$ \\
		$[1,1,1,1]$ & (4) & $q^{17}+q^{15}+q^{14}+2q^{13}+q^{12}+2q^{11}+q^{10}+q^9$ $\text{\hspace{6 mm} }$\\
		\hline
	\end{tabular}
\end{center}

\vspace{5mm}
\begin{center} \label{table3}

\def\arraystretch{1.2}
\begin{tabular}{l|l|l} 
	\multicolumn{3}{c} {\textbf{Table 3:} Some Kac polynomials for the quiver $\circlearrowleft\!\!\!\bullet \!\!\! \rightarrow \!\!\! \bullet \!\!\!\circlearrowright$ and $\bullet \!\!\! \rightarrow \!\!\! \bullet$} \\
	\hline
	$\,\alpha$ & $A_\Gamma(\alpha, q)$ for $\circlearrowleft\!\!\!\bullet \!\!\! \rightarrow \!\!\! \bullet \!\!\!\circlearrowright$ $\text{\hspace{20 mm}}$
	& $A_\Gamma(\alpha, q)$ for $\bullet \!\!\! \rightarrow \!\!\! \bullet$ $\text{\hspace{20 mm}}$\\
	\hline
	$(0,1)$ & $q$ & $1$ \\  
	$(0,2)$ & $q$ & $0$ \\  
	$(1,0)$ & $q$ & $1$ \\  
	$(1,1)$ & $q^2$ & $1$ \\  
	$(1,2)$ & $q^3+q^2$ & $0$ \\  
	$(2,0)$ & $q$ & $0$ \\  
	$(2,1)$ & $q^3+q^2$ & $0$ \\  
	$(2,2)$ & $q^5+q^4+3q^3+q^2$ & $0$ \\
	\hline
\end{tabular}
\end{center}

\vspace{5mm}
\begin{center} \label{table4}
\def\arraystretch{1.2}
\begin{tabular}{l|l|l|l} 
\multicolumn{4}{c} {\textbf{Table 4:} Some refined Kac functions for the quiver $\circlearrowleft\!\!\!\bullet \!\!\! \rightarrow \!\!\! \bullet \!\!\! \circlearrowright$ and $\bullet \!\!\! \rightarrow \!\!\! \bullet$} \\
	\hline
	$\,\lambda_*$ & $\lambda_*^\mathrm{v}$ & $A_\Gamma(\lambda_*, q)$ for $\circlearrowleft\!\!\!\bullet \!\!\! \rightarrow \!\!\! \bullet \!\!\! \circlearrowright$ $\text{\hspace{2 mm}}$
	& $A_\Gamma(\lambda_*, q)$ for $\bullet \!\!\! \rightarrow \!\!\! \bullet$ $\text{\hspace{2 mm}}$\\
	\hline
	$([0],[1])$ & ((0,1)) & $q$ & $1$ \\  
	$([0],[2])$ & ((0,0),(0,1)) & $q$ & $q^{-1}$ \\
	$([0],[1,1])$ & ((0,2)) & $0$ & $-q^{-1}$ \\
	$([1],[0])$ & ((1,0)) & $q$ & $1$ \\
     $([1],[1])$ & ((1,1)) & $q^2$ & $1$ \\
	$([1],[2])$ & ((1,0),(0,1)) & $q^2$ & $q^{-1}$ \\
	$([1],[1,1])$ & ((1,2)) & $q^3$ & $-q^{-1}$ \\
	$([2],[0])$ & ((0,0),(1,0)) & $q$ & $q^{-1}$ \\
     $([2],[1])$ & ((0,1),(1,0)) & $q^2$ & $q^{-1}$ \\
	$([2],[2])$ & ((0,0),(1,1)) & $q^3+q^2$ & $q^{-1}+q^{-2}$ \\
	$([2],[1,1])$ & ((0,2),(1,0)) & $q^3$ & $-q^{-2}$ \\
	$([1,1],[0])$ & ((2,0)) & $0$ & $-q^{-1}$ \\
     $([1,1],[1])$ & ((2,1)) & $q^3$ & $-q^{-1}$\\
	$([1,1],[2])$ & ((2,0),(0,1)) & $q^3$ & $-q^{-2}$\\
	$([1,1],[1,1])$ & ((2,2)) & $q^5+q^4$ & $-q^{-1}+q^{-2}$\\
	\hline
\end{tabular}
\end{center}
\vspace{2mm}

\vspace{0.2cm}
\textit{Email address}: \texttt{jiuzhao.hua@gmail.com}

\end{document}